\title{The Brush Number of the Two-Dimensional Torus}
\author{Ta Sheng Tan\thanks{Department of Pure Mathematics and Mathematical Statistics, Centre for Mathematical Sciences, University of Cambridge, Wilberforce Road, Cambridge CB3 0WB, United Kingdom. Email: T.S.Tan@dpmms.cam.ac.uk.}}
\newtheorem{thm}{Theorem}[section]
\newtheorem{lemma}[thm]{Lemma}
\newtheorem{observation}[thm]{Observation}
\begin{document}

\maketitle

\begin{abstract}
In this paper we are interested in the brush number of a graph - a concept introduced by McKeil and by Messinger, Nowakowski and Pralat. Our main aim in this paper is to determine the brush number of the two-dimensional torus. This answers a question of Bonato and Messinger. We also find the brush number of the cartesian product of a clique with a path, which is related to the Box Cleaning Conjecture of Bonato and Messinger.
\end{abstract}

\section{Introduction}

Given a graph $G$, we consider acyclic orientations of its edges. We are interested in an acyclic orientation in which the outdegrees are `as close as possible' to the indegrees, in the following precise sense: we would like to minimise the quantity $ \sum_{v\in V(G)} \max\{0,d^+(v)-d^-(v)\}$, where $d^+(v)$ and $d^-(v)$ denote the outdegree and indegree of $v$ respectively. This quantity is called the \emph{brush number} of $G$, introduced in \cite{keil} and \cite{messinger}.
\\\\
We digress briefly to mention the original formulation, a \emph{graph cleaning} problem. The set up is as follows. Initially, all edges of a graph are considered \emph{dirty} and a fixed number of brushes are assigned to a set of vertices. At each step, a vertex $v$ is \emph{cleaned} by sending a brush along each incident dirty edge - but this is only allowed if there are at least as many brushes at $v$ as there are incident dirty edges. When a dirty edge is traversed by a brush, it is considered cleaned, and brushes cannot traverse a clean edge. A graph is cleaned when there are no more dirty edges. In this formulation, the brush number, $b(G)$ is the minimum number of brushes needed to clean a graph $G$. It is easy to see (or see later in this section) that these two formulations are the same.
\\\\
Bonato and Messinger \cite{bonato} consider the brush number of cartesian products. The cartesian product $G \times H$ of two graphs $G$ and $H$  has vertex set $V(G) \times V(H)$, and two vertices $(a,b)$ and $(c,d)$ are joined if $a=c$ and $bd\in E(H)$, or $ab\in E(G)$ and $b=d$. Based on the fact that a graph $G$ of order $m$ satisfies 
$$ 1=b(P_m) \le b(G) \le b(K_m)=\Big\lfloor \frac{m^2}{4} \Big\rfloor ,$$
which follows from results in \cite{messinger0} and \cite{messinger}, they give an elegant conjecture on bounds for the brush number of the cartesian product in general.
\\\\
\textbf{Box Cleaning Conjecture \cite{bonato}:} Given a graph $G$ of order $m$ and a graph $H$, we have
$$b(P_m \times H) \le b(G \times H) \le b(K_m \times H).$$
They gave several results towards verifying the conjecture for the case when $H$ is a path or a cycle \cite{bonato}.
\\\\
In this short paper, we find the brush number of $C_m \times C_n$ for all $m$ and $n$, and hence answer a question in \cite{bonato}. We also determine the brush number of $K_m \times P_n$; together with the results in \cite{bonato}, this verifies the Box Cleaning Conjecture for the case when $H$ is a path.
\\\\
We recall some notation from \cite{bonato}. Given a cleaning process of a graph $G$, the \emph{initial configuration} of brushes, $w_0$ is a function on $V(G)$ such that $w_0(v)$ denotes the number of brushes assigned to $v$ initially. A \emph{cleaning sequence} $\alpha = (\alpha_1,\alpha_2,\ldots,\alpha_n)$ of the vertices is the order in which vertices are cleaned in a cleaning process, that is, $\alpha_i$ is cleaned before $\alpha_j$ for $i<j$. For $\alpha_i\alpha_j\in E(G)$, we write $\alpha_i \rightarrow \alpha_j$ if  $i<j$. Note that this induces an acyclic orientation of $G$ with $\max\{0,d^+(v)-d^-(v)\}=w_0(v)$. Conversely, given an acyclic orientation of $G$, there exists an ordering of $V(G)$ such that each edge is directed from an earlier vertex to a later vertex and so induces a cleaning sequence of $G$ with $w_0(v) = \max\{0,d^+(v)-d^-(v)\}$. Hence, the graph cleaning problem is indeed equivalent to the problem mentioned at the beginning of this section. 
\\\\
We say a cleaning process or a cleaning sequence is \emph{optimal} or \emph{good} if it uses $b(G)$ brushes.

\section{Two-Dimensional Torus}

In this section, we show that the brush number for an $m$ by $n$ torus is exactly $2(m+n-2)$, i.e. $b(C_m \times C_n) = 2(m+n-2)$ for $m,n \geq 3$.
\\\\
The upper bound is easy. Indeed, we give an initial configuration using $2(m+n-2)$ brushes. Let the vertex set of $C_m$ (resp $C_n$) be $\{u_1,u_2,\ldots,u_m\}$ (resp $\{v_1,v_2,\ldots,v_n\}$) with the edge set being $\{u_iu_{i+1}:1\leq i\leq m\}$ (resp $\{v_iv_{i+1}:1\leq i\leq n\}$), where we take $u_{m+1} = u_1$ (resp $v_{n+1} = v_1$). The initial configuration for $C_m \times C_n$ is as follows.
\begin{equation*}
 w_0(u_i,v_j)=\left\{
\begin{array}{rl}
 4 & \text{if } i=j=1,\\
 2 & \text{if } i=1,2\leq j\leq n-1 \text{ or } 2\leq i \leq m-1,j=1,\\
 0 & \text{otherwise}.
\end{array} \right.
\end{equation*}
\noindent
The total number of brushes used is $2(m+n-2)$ and it is easy to see that this initial configuration suffices to clean $C_m \times C_n$.
\\\\
For the lower bound, we use induction on $m+n$. The base case would be to show $b(C_3 \times C_n)=2n+2$ for $n\geq 3$. The proof of this can be found in \cite{bonato}, but for the sake of completeness, we will include a proof here. 

\begin{lemma}\label{lemma}
For $n\geq 3$, $b(C_3 \times C_n)=2n+2$.
\end{lemma}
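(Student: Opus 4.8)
The plan is to prove both bounds for $b(C_3 \times C_n) = 2n+2$, with the upper bound following immediately from the general construction in the excerpt (specializing $m=3$ gives an initial configuration with $2(3+n-2) = 2n+2$ brushes), so the real work is the lower bound. The graph $C_3 \times C_n$ is $4$-regular on $3n$ vertices, so $|E| = 6n$, and for any acyclic orientation the brush number equals $\sum_v \max\{0, d^+(v) - d^-(v)\}$. The strategy I would adopt is to fix an arbitrary acyclic orientation (equivalently, an optimal cleaning sequence $\alpha$) and extract a lower bound of $2n+2$ on $\sum_v \max\{0, d^+(v)-d^-(v)\}$ by carefully analysing the cleaning process.

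First I would set up the standard counting framework. Think of the $n$ triangles $T_j = \{(u_1,v_j),(u_2,v_j),(u_3,v_j)\}$ as ``columns'', linked cyclically by the $C_n$ direction. The quantity to bound is the number of brushes, which in the cleaning-process language is the number of brushes present initially. A clean way to get a lower bound is to track, across the cleaning sequence, how many brushes must be ``injected'' (i.e. placed initially) versus how many arrive along already-cleaned edges. I would consider a cut argument: for each prefix of the cleaning sequence, look at the set $S$ of already-cleaned vertices and count dirty edges crossing the boundary. Since each such crossing edge must eventually be cleaned by a brush travelling out of $S$, and brushes are conserved except at initial placement, one obtains that the initial number of brushes is at least the maximum over all cleaning stages of the number of brushes ``in transit,'' which ties to edge boundaries of vertex sets.

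The key step is to show that at some moment the process is forced to carry at least $2n+2$ brushes. The natural approach is to consider the stage of cleaning restricted to each triangle column and to the two cycle directions separately, exploiting that $C_3$ forces each triangle to contribute at least $2$ brushes locally (since $b(K_3) = \lfloor 9/4 \rfloor = 2$) while the global cyclic structure in the $C_n$ direction forces an extra additive constant. Concretely, I would argue that summing the local triangle contributions gives roughly $2n$, and then show the wraparound cycle of length $n$ cannot be cleaned without an additional $2$ brushes beyond the per-triangle cost — this $+2$ is exactly the gap between a path and a cycle. I expect the main obstacle to be making this ``local plus global'' decomposition rigorous without double-counting: brushes that serve a triangle may also be the ones traversing cycle edges, so I would need an accounting (e.g. via a potential function $\Phi(t) = $ number of brushes currently held, or via summing $d^+(v)-d^-(v)$ over suitably chosen vertex subsets whose boundaries expose the $2n+2$ edges that must be paid for) that assigns each initial brush to at most one demand. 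The cleanest route is likely a direct lower bound on $\sum_v \max\{0,d^+(v)-d^-(v)\}$ using a min-cut / discharging argument tailored to the torus's two cyclic directions, rather than simulating the dynamic process, and I would aim to identify an explicit edge cut of size $2n+2$ (for instance, the edges leaving the source vertices in each direction) that no acyclic orientation can avoid charging fully.
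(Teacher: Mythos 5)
Your upper bound is fine, and your high-level intuition for the lower bound (roughly $2$ per triangle plus an extra $2$ from the cyclic wrap-around) has exactly the shape of the paper's proof. But both steps that carry all the difficulty are left unresolved, and the fallback you propose does not repair them. First, the claim that ``$C_3$ forces each triangle to contribute at least $2$ brushes locally (since $b(K_3)=2$)'' is false if it means that each copy of $C_3$ holds at least $2$ brushes of the initial configuration: in the optimal configuration the brushes are concentrated in one row and one column, and some triangles hold \emph{no} initial brushes at all, being cleaned entirely by brushes arriving from neighbouring triangles. The paper repairs this with a charging argument on the first-cleaned vertex $w_j$ of each triangle: such a vertex needs $4-2c_j$ initial brushes, where $c_j\in\{0,1,2\}$ is the number of its incident edges already clean when it is cleaned, and if $k$ of the $w_j$ get away with $c_j=2$ (cost $0$), then the cyclic structure forces at least $k$ other $w_j$'s to send brushes to \emph{both} neighbouring triangles (cost $4$), giving $4k+2(n-2k)=2n$. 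You flag precisely this double-counting issue as ``the main obstacle'' but do not overcome it; that is the gap. Second, the extra $+2$ is not a generic path-versus-cycle phenomenon you can simply invoke: the paper obtains it by a separate argument about the first vertex $u$ cleaned among the \emph{second}-cleaned vertices of the triangles, showing that either $u$ needs $2$ initial brushes or the clean edges it exploits force two extra brushes among the $w_j$ beyond the $2n$ already counted.

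The proposed rescue via ``an explicit edge cut of size $2n+2$ that no acyclic orientation can avoid charging fully'' cannot work as stated. A fixed cut gives no lower bound at all, since an acyclic orientation may cross it in both directions. What is true is a min-flow/max-cut duality: for each acyclic orientation, $\sum_v \max\{0,d^+(v)-d^-(v)\}$ equals the maximum, over downsets $S$ of \emph{that} orientation, of the number of edges leaving $S$. So the cut must be chosen as a function of the orientation, which puts you back at exactly the orientation-by-orientation analysis (who receives brushes before being cleaned, who sends them) that you were hoping to avoid. In short: your outline matches the paper's strategy, but the two arguments that constitute the proof --- the sender/receiver compensation yielding $2n$, and the second-vertex argument yielding $+2$ --- are missing, and the first of them cannot be replaced by the naive per-triangle bound you state.
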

\begin{proof}
We can express $C_3 \times C_n$ as $n$ copies of $C_3$. Suppose $C_3 \times C_n$ is cleaned using $b(C_3 \times C_n)$ brushes. Let $w_i$ be the first vertex cleaned in the $i$th copy of $C_3$. Suppose there are $k$ vertices in $\{w_1,w_2,\ldots,w_n\}$ that received two brushes from the neighbouring copies of $C_3$ before it is cleaned, then there must be at least $k$ other vertices in $\{w_1,w_2,\ldots,w_n\}$ sending two brushes to both the neighbouring copies of $C_3$ when it is cleaned.
\\\\
This gives a lower bound of the number of brushes in the initial configuration of $\{w_1,w_2,\ldots,w_n\}$, which is $4k+2(n-2k)=2n$. Now, look at the set of the second vertex cleaned in each copy of $C_3$ and let $u$ be the first vertex cleaned in this set. It is not hard to see that either $u$ has two brushes in the initial configuration or there are two extra brushes in $\{w_1,w_2,\ldots,w_n\}$. This completes the proof.
\end{proof}

\noindent
Now we prove the lower bound for the brush number of $C_m \times C_n$ for general $m$ and $n$.

\begin{thm}
For $m,n\geq 3$, the number of brushes needed to clean $C_m \times C_n$ is at least $2(m+n-2)$.
\end{thm}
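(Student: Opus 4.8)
The plan is to induct on $m+n$. The base case $m=3$ is Lemma~\ref{lemma}, and since the cartesian product is symmetric, $C_m\times C_n\cong C_n\times C_m$, the case $n=3$ follows as well; so I may assume $m,n\ge 4$. It then suffices to establish the single-step reduction
\[ b(C_m\times C_n)\ \ge\ b(C_{m-1}\times C_n)+2, \]
because the induction hypothesis gives $b(C_{m-1}\times C_n)\ge 2((m-1)+n-2)$, and hence $b(C_m\times C_n)\ge 2(m+n-2)$.

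To prove the reduction I would pass to the equivalent orientation formulation $b(G)=\min\sum_v e(v)$, taken over acyclic orientations, where $e(v)=\max\{0,d^+(v)-d^-(v)\}$. Fix an \emph{optimal} acyclic orientation of $C_m\times C_n$, achieving $b(C_m\times C_n)=\sum_v e(v)$, and regard the graph as $m$ rows, each a copy of $C_n$, arranged cyclically through the $C_m$-edges. I then delete one row $R$ and reconnect its two neighbouring rows column by column — that is, contract the $C_m$-direction through $R$ — to obtain $C_{m-1}\times C_n$; the orientation is inherited on the surviving vertices, and the new vertical edges are oriented by the inherited vertex order, which automatically keeps the orientation acyclic. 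Writing $E_R=\sum_{v\in R} e(v)$ for the excess removed with $R$ and $\Delta$ for the net change in excess created at the two re-glued rows, this certifies $b(C_{m-1}\times C_n)\le b(C_m\times C_n)-E_R+\Delta$, so it suffices to exhibit a row with $E_R-\Delta\ge 2$. Lower-bounding $E_R$ is exactly the first-cleaned count from Lemma~\ref{lemma}: the first vertex cleaned in each row sends two brushes inside its own copy, and matching the rows' vertices that receive two vertical brushes against those that send two reproduces the pairing giving the analogue of $4k+2(n-2k)$.

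The step I expect to be the main obstacle is bounding $\Delta$, i.e. the boundary bookkeeping at the two re-glued rows. Re-routing each vertical edge from $R$ to the two rows on either side can raise the out-degree, and hence the excess, at those vertices, and a priori $\Delta$ need not be small. This is precisely where the cancellation intrinsic to the problem appears: if one writes $d^+(v)-2=(\rho(v)-1)+(\gamma(v)-1)$, where $\rho(v)$ and $\gamma(v)$ count the out-edges of $v$ within its row and within its column, then a vertex that is a source along its row but a sink along its column has $d^+(v)=2$ and contributes nothing, so excess can be created or destroyed almost freely under the re-gluing. My intended way around this is not to fix $R$ in advance but to choose it well — for instance by averaging $E_R-\Delta$ over all $m$ candidate rows, since each surviving row is a neighbour for only two of the $m$ deletions — and, if the inherited order is unfavourable, to replace it by a topological order for which the re-glued matching edges do not increase excess while preserving acyclicity. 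Making this choice quantitative, so that some deletion genuinely yields $E_R-\Delta\ge 2$, is the crux of the argument.
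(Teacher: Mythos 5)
There is a genuine gap, and you have named it yourself: the entire content of the theorem is concentrated in the step you defer. Your reduction deletes a row $R$ and re-glues its two neighbouring rows, which creates new vertical edges between previously non-adjacent vertices; these new edges can increase out-degrees, and you have no control over the resulting correction term $\Delta$. The ideas you sketch for handling it (averaging over the $m$ choices of $R$, or re-choosing a topological order) are not carried out, and the averaging idea in particular is doomed on its own: in an optimal configuration of $C_m\times C_n$ (e.g.\ the one giving the upper bound $2(m+n-2)$) almost all rows carry zero excess, so $E_R=0$ for most $R$, and you cannot hope that every, or even a typical, deletion satisfies $E_R-\Delta\ge 2$; one must make a careful, non-generic choice, and that choice is exactly what is missing.

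The paper's proof avoids the boundary bookkeeping entirely by a different reduction: instead of deleting a row, it \emph{identifies} two adjacent rows. Given an optimal cleaning sequence $\alpha$ with configuration $w_0$, assign to each merged vertex the sum of the brushes of its two preimages and clean $C_{m-1}\times C_n$ by following $\alpha$, cleaning each merged vertex the first time one of its preimages is reached. No new adjacencies are created, so this works with no correction term and already gives $b(C_{m-1}\times C_n)\le b(C_m\times C_n)$. The saving of $2$ then comes from choosing \emph{which} adjacent pair of rows to merge: walk along $\alpha$ to the first vertex with fewer than $4$ initial brushes; it has an earlier-cleaned neighbour, and one merges the two rows (or columns) containing these two vertices. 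Since in any good cleaning of the torus no vertex ever needs more than $4$ brushes and no adjacent pair needs more than $6$ in total, the merged configuration has a surplus: if that vertex had $2$ initial brushes, the merged vertex would hold $6$; if it had $0$, it has two earlier neighbours and merging produces two adjacent vertices each holding $4$. Either way two brushes can be deleted, giving $b(C_m\times C_n)\ge b(C_{m-1}\times C_n)+2$. Your induction skeleton and target inequality match the paper's, but without an argument replacing this merging-plus-selection step, your proposal does not prove the theorem.
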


\begin{proof}
 We use induction on $m+n$. The theorem is true for $m=3$ or $n=3$ by Lemma~\ref{lemma}. So assume $m,n\geq 4$ and suppose the theorem is true for $C_{m-1} \times C_n$ and $C_m \times C_{n-1}$.
\\\\
Given a cleaning sequence of $C_m \times C_n$, $\alpha$ using $b(C_m \times C_n)$ brushes with initial configuration $w_0$, we claim that we can combine any two consecutive rows (or columns) to provide a cleaning sequence for $C_{m-1} \times C_n$ (or $C_m \times C_{n-1}$) without using any extra brushes.
\\\\
Without loss of generality, assume we are combining the last two rows. We provide an initial configuration for $C_{m-1} \times C_n$, $w_0^{'}$ from $w_0$ as follows.
\begin{equation*}
 w_0^{'}(u_i,v_j)=\left\{
\begin{array}{ll}
 w_0(u_i,v_j) + w_0(u_{i+1},v_j) & \text{if } i=m-1\\
 w_0(u_i,v_j) & \text{otherwise}
\end{array} \right.
\end{equation*}

\noindent
It is easy to see that this initial configuration suffices to clean $C_{m-1} \times C_n$. Indeed, we can clean $C_{m-1} \times C_n$ by going along $\alpha$ and whenever we come to a vertex in the last two rows of $C_m \times C_n$, we clean the corresponding vertex in $C_{m-1} \times C_n$ if it has not been cleaned.
\\\\
This shows that the brush number of $C_m \times C_n$ is at least the brush number of $C_{m-1} \times C_n$. To prove the theorem, we need to show that we can take away two brushes during this combining process while still leaving enough to clean $C_{m-1} \times C_n$. We claim this can be done by picking two `correct' rows (or columns).
\\\\
It is clear that the maximum number of brushes at a vertex at any time is 4 and that the sum of the number of brushes at two adjacent vertices is at most 6 in any good cleaning sequence of 2-dimensional torus. So we can go along the cleaning sequence $\alpha$ and locate the first vertex with less than 4 brushes in the initial configuration. This vertex must have an earlier neighbour. The two rows (or columns) where each contains one of these two vertices are the `correct' rows (or columns). Indeed, if this vertex has 2 brushes in the initial configuration, then by combining the two `correct' rows (or columns), the corresponding vertex would have 6 brushes in the new initial configuration; otherwise the vertex does not have any brushes initially and this implies that it has two earlier neighbours and by combining the two `correct' rows (or columns), there would be two adjacent vertices each with 4 brushes in the new initial configuration. This completes the proof.
\end{proof}

\section{Cleaning $K_m \times P_n$}

In this section, we show that the brush number of $K_m \times P_n$ is exactly $n\big\lfloor\frac{m^2}{4}\big\rfloor$  when $m$ is even and $n\big\lfloor\frac{m^2}{4}\big\rfloor + 1$ when $m$ is odd.
\\\\
We will assume $m$ is even throughout this section and the case of $m$ being odd is similar and omitted.
\\\\
For the upper bound, we will give an initial configuration using $n(\frac{m^2}{4})$ brushes. It is natural to let the vertex set of $K_m \times P_n$ be $\{(x_i,y_j):1\leq i\leq m, 1\leq j\leq n\}$. The initial configuration for $K_m \times P_n$ is as follows.
\begin{equation*}
 w_0(x_i,y_j)=\left\{
\begin{array}{ll}
 \max\{m+2-2i,0\} & \text{if } j=1,\\
 \max\{m+1-2i,0\} & \text{if } 1<j<n,\\
 \max\{m-2i,0\} & \text{if } j=n.
\end{array} \right.
\end{equation*}

\noindent
The total number of brushes used is $n(\frac{m^2}{4})$ and it is easy to see that this initial configuration suffices to clean $K_m \times P_n$.
\\\\
For the lower bound, we use induction on $n$. The idea of the proof is to find a new configuration for $K_m \times P_{n-1}$ from the optimal cleaning process of $K_m \times P_n$ by deleting one of the end copies of $K_m$ and modifying the configuration of brushes in the adjacent copy of $K_m$. More precisely, for each vertex in the second copy of $K_m$, we want to add brushes to it or take away brushes from it depending on whether it has brushes in the initial configuration, whether its neighbour in the first copy of $K_m$ has brushes in the initial configuration and also whether it is cleaned before its neighbour in the first copy of $K_m$. There are two choices for each of these three conditions, hence we can divide adjacent pair of vertices into eight classes.
\\\\
We will first give a lemma to show that the base case holds, and then we will prove the inductive step, where we use very similar arguments to those in the proof of the base case. The following trivial observation turns out to be very useful in our proof.

\begin{observation} \label{obs}
In any cleaning sequence, there cannot be a set of four vertices, $\{a,b,c,d\}$ such that $a\rightarrow b$, $b\rightarrow c$, $c\rightarrow d$ and $d\rightarrow a$.
\end{observation}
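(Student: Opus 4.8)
The plan is to unwind the arrow notation and reduce everything to the transitivity of the linear order furnished by the cleaning sequence. Recall that for an edge $xy$ the relation $x \rightarrow y$ merely records that $x$ is cleaned strictly before $y$ in the cleaning sequence $\alpha = (\alpha_1,\ldots,\alpha_N)$; writing $\pi(v)$ for the position of $v$ in $\alpha$, we have $x \rightarrow y$ if and only if $\pi(x) < \pi(y)$. Thus each arrow encodes a strict inequality between positions, and these positions are distinct integers.

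With this translation the statement is immediate. Suppose, for contradiction, that four vertices $a,b,c,d$ satisfy $a\rightarrow b$, $b\rightarrow c$, $c\rightarrow d$ and $d\rightarrow a$. Reading off positions gives $\pi(a) < \pi(b) < \pi(c) < \pi(d) < \pi(a)$, whence $\pi(a) < \pi(a)$, which is absurd. Hence no such configuration can occur.

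Equivalently, one may argue through the acyclic orientation induced by $\alpha$: as noted earlier in the paper, a cleaning sequence orients every edge from its earlier endpoint to its later one, and the resulting orientation is acyclic. The chain $a \rightarrow b \rightarrow c \rightarrow d \rightarrow a$ would then be a directed $4$-cycle, contradicting acyclicity. Either formulation delivers the result in a single line.

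There is no genuine obstacle here, as the author's label \emph{trivial observation} already signals; the entire content is that a strict linear order admits no cyclic chain of inequalities. The only point worth flagging is that the hypothesis describes a true cycle of four arrows returning to its origin, rather than an arbitrary orientation of the edges of a $4$-cycle: it is precisely the closing arrow $d \rightarrow a$ that forces the contradiction.
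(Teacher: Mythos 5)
Your proof is correct and is exactly the intended argument: the paper states this observation without proof precisely because, as you note, a cleaning sequence is a linear order on the vertices and each arrow points from an earlier vertex to a later one, so a directed cycle would force $\pi(a)<\pi(a)$. Nothing to add.
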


\noindent
We will show the base case, that is the brush number of $K_m \times P_2$. 

\begin{lemma} \label{lemma2}
 $b(K_m \times P_2) \geq \frac{m^2}{2}$.
\end{lemma}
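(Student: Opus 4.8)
The plan is to prove that any cleaning of $K_m \times P_2$ requires at least $\frac{m^2}{2}$ brushes, where $K_m \times P_2$ consists of two copies of $K_m$ (call them the first and second copy) with a perfect matching joining corresponding vertices. I would first recall the key structural fact, already used implicitly in the earlier arguments: in any acyclic orientation, the brush number contribution of a vertex $v$ is $\max\{0, d^+(v) - d^-(v)\}$, so I want to bound the total excess of outdegree over indegree summed over all vertices. For a single copy of $K_m$ in isolation, the brush number is $b(K_m) = \lfloor m^2/4 \rfloor = m^2/4$ (since $m$ is even), achieved when each vertex's net outdegree follows the pattern $m-1, m-3, \ldots$ truncated at zero. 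The heart of the matter is that the matching edges between the two copies interact with the internal $K_m$ orientations, and I must show these interactions cannot save brushes below $2 \cdot (m^2/4) = m^2/2$.

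The approach I would take is to consider the cleaning sequence $\alpha$ restricted to each copy and account for the matching edges carefully. For each pair of matched vertices $(x_i, y_1)$ and $(x_i, y_2)$, the matching edge is oriented one way or the other depending on which is cleaned first; this is exactly the third of the ``three conditions'' the author's inductive framework highlights. I would argue that the brushes required for the two copies, if they were disjoint $K_m$'s, would total $2 \cdot \frac{m^2}{4} = \frac{m^2}{2}$, and then show that the matching edges do not reduce this total. The key observation (the author's Observation~\ref{obs}, forbidding a directed $4$-cycle) is that one cannot have matching edges and internal edges conspiring to create circular savings across both copies; any brush ``saved'' by orienting a matching edge into a vertex in one copy must be paid for by a corresponding increase elsewhere, preventing a net decrease.

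More concretely, I would set up a counting argument: let $S_1$ and $S_2$ denote the total net-outdegree excess within the first and second copies respectively (ignoring matching edges), and let the matching edges contribute adjustments. A matching edge oriented from copy one to copy two increases the outdegree of the copy-one endpoint and the indegree of the copy-two endpoint; I would track, vertex by vertex, whether such an edge actually changes the $\max\{0, d^+ - d^-\}$ value. The crux is that a matching edge can only reduce a vertex's contribution if that vertex already has strictly positive net excess within its own copy, and one shows via the acyclicity constraint (Observation~\ref{obs}) that the matching edges cannot all be oriented to exploit such reductions simultaneously in both copies. Summing the per-vertex bounds should yield total at least $\frac{m^2}{2}$.

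The main obstacle I anticipate is precisely this global consistency argument: it is not enough to bound each copy separately, because a clever orientation might shift excess from one copy to the other across the matching to achieve cancellation. The delicate step is to show that the orientations of the matching edges, which are forced to be consistent with a single acyclic global orientation, cannot be chosen independently and favourably for both copies at once. I expect to need the no-directed-$4$-cycle observation to rule out the ``circular'' configurations that would permit a net saving, and to combine this with a careful case analysis over the eight classes of matched pairs that the author foreshadows in the paragraph preceding the lemma.
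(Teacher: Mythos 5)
Your strategy is genuinely different from the paper's: the paper deletes the first copy of $K_m$, repairs the brush configuration on the second copy so that it cleans $K_m$ on its own, and invokes $b(K_m)=\frac{m^2}{4}$ together with an eight-class case analysis; you instead decompose the total brush count directly into within-copy excesses plus per-matching-edge corrections. That route can in fact be completed, because in a complete graph the $i$th vertex cleaned has within-copy excess exactly $m-2i+1$, so each copy contributes exactly $\frac{m^2}{4}$ for \emph{every} cleaning order (not just an optimal one), and each matching edge then changes the total by $+1$, $0$ or $-1$. However, as written your argument stops at precisely the decisive step: ``summing the per-vertex bounds should yield total at least $\frac{m^2}{2}$'' is asserted, not proved, and the statement you propose to extract from Observation~\ref{obs} --- that the reductions cannot be exploited ``simultaneously in both copies'' --- is true but not sufficient on its own. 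Edges contributing $-1$ genuinely can occur in an optimal cleaning; what must be shown is not that they are rare or one-sided, but that they are always outweighed by at least as many $+1$ edges, and that requires a counting step you never state.

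Concretely, here is what is missing. Call a vertex \emph{early} if it is among the first $\frac{m}{2}$ cleaned in its own copy and \emph{late} otherwise. A matching edge contributes $-1$ exactly when oriented from a late vertex to an early vertex, $+1$ exactly when oriented from an early vertex to a late vertex, and $0$ otherwise (you state the condition on the head of the edge but not the equally necessary condition on its tail). Observation~\ref{obs} then gives your one-sidedness claim: a $-1$ edge from late-in-copy-one $u$ to early-in-copy-two $v$, together with a $-1$ edge from late-in-copy-two $v'$ to early-in-copy-one $u'$, yields the directed four-cycle $u'\rightarrow u\rightarrow v\rightarrow v'\rightarrow u'$, since within a copy every early vertex points to every late vertex. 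But to finish you still need: assuming (without loss of generality) that all $-1$ edges go from late vertices of copy one to early vertices of copy two, the perfect matching and $|E_1|=|E_2|=\frac{m}{2}$ force the number of matched pairs of type (early-one, late-two) to equal the number of pairs of type (late-one, early-two); every pair of the former type must be oriented from copy one to copy two (the other orientation would be a $-1$ edge of the forbidden direction), so each such pair contributes $+1$, and these pairs are at least as numerous as the $-1$ edges. Without this pigeonhole argument nothing prevents the corrections from summing to a negative number, and the bound $\frac{m^2}{2}$ does not follow.
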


\begin{proof}
 Suppose $K_m \times P_2$ is cleaned using $b(K_m \times P_2)$ brushes with initial configuration $w_0$. Let $u_i$ be the $i$th vertex cleaned in the first copy of $K_m$ and $v_j$ be the $j$th vertex cleaned in the second copy of $K_m$. In the proof, $u$'s will always refer to vertices in the first copy of $K_m$ and $v$'s will always refer to vertices in the second copy of $K_m$.
\\\\
We can split adjacent pair of vertices $(u,v)$ into eight classes as follows.
\begin{align*}
A &= \{(u,v):w_0(u)>0,u\rightarrow v,w_0(v)>0\},\quad |A|=a\\
B &= \{(u,v):w_0(u)>0,u\rightarrow v,w_0(v)=0\},\quad |B|=b\\
C &= \{(u,v):w_0(u)>0,u\leftarrow v,w_0(v)>0\},\quad |C|=c\\
D &= \{(u,v):w_0(u)>0,u\leftarrow v,w_0(v)=0\},\quad |D|=d\\
E &= \{(u,v):w_0(u)=0,u\rightarrow v,w_0(v)>0\},\quad |E|=e\\
F &= \{(u,v):w_0(u)=0,u\rightarrow v,w_0(v)=0\},\quad |F|=f\\
G &= \{(u,v):w_0(u)=0,u\leftarrow v,w_0(v)>0\},\quad |G|=g\\
H &= \{(u,v):w_0(u)=0,u\leftarrow v,w_0(v)=0\},\quad |H|=h
\end{align*}

\noindent
Deleting the first copy of $K_m$ and modifying the second copy of $K_m$ according to which type each pair of vertices belongs to, we can give a configuration to clean $K_m$. It is not too hard to see that if $w_0(u_i)>0$, then $i\leq \frac{m}{2}$ and if $w_0(u_i)=0$, then $i\geq \frac{m}{2}$. The same is true for $v_j$. In fact, for $u_i$ with non-empty brushes in the initial configuration, $w_0(u_i) = m-2i+2$ if it is in $A$ or $B$ and $w_0(u_i) = m-2i$ otherwise. Therefore, there are $\frac{m^2-2m}{4}+2a+2b$ brushes in the first copy of $K_m$. To be able to clean $K_m$ after deleting the first copy of $K_m$, we shall modify the configuration in the second copy of $K_m$ accordingly. Vertices in $A$ and $E$ would each need an extra brush while every vertex in $C$ and $G$ has one extra brush. The configuration of the vertices in the other classes need not be changed, unless $w_0(v_{\frac{m}{2}})=0$, in which case we need one extra brush. This give a lower bound for $b(K_m \times P_2)$,

$$b(K_m \times P_2) - \frac{m^2-2m}{4}-a-2b + e - c - g + \delta_{w_0(v_{\frac{m}{2}})=0}\ge b(K_m),$$
where $\delta_{w_0(v_{\frac{m}{2}})=0}$ is 1 if $w_0(v_{\frac{m}{2}})=0$ and 0 otherwise. As we know $b(K_m)=\frac{m^2}{4}$, we are left to show $a+2b-e+c+g-\delta_{w_0(v_{\frac{m}{2}})=0}\ge \frac{m}{2}$.
\\\\
Give a pair of vertices in $D$, $(u,v)$ and a pair of vertices in $E$, $(u',v')$, we can see that $u \rightarrow u'$, $u' \rightarrow v'$, $v' \rightarrow v$, $v \rightarrow u$, contradicting Observation~\ref{obs}. So we can, without loss of generality, assume $e=0$.
\\\\
Vertices in the second copy of $K_m$ that are in $A$, $C$, and $G$ are those with non-empty brushes in the initial configuration. Hence we have $a+c+g = \frac{m}{2} - \delta_{w_0(v_{\frac{m}{2}})=0}$. So we are left to show $2b \ge 2\delta_{w_0(v_{\frac{m}{2}})=0}$. We are done, unless $b=0$ and $w_0(v_{\frac{m}{2}})=0$. In which case $v_{\frac{m}{2}}$ is in $F$, as we are assuming $B=E=\empty$. Given a pair of vertices in $D$, then together with the pair of vertices in $F$ that contains $v_{\frac{m}{2}}$, we have a contradiction to Observation~\ref{obs}. So $d=0$.
\\\\
Now $a+c=\frac{m}{2}-\delta_{w_0(u_{\frac{m}{2}})=0}$, where $\delta_{w_0(u_{\frac{m}{2}})=0}$ is 1 if $w_0(u_{\frac{m}{2}})=0$ and 0 otherwise. So we have $g=\delta_{w_0(u_{\frac{m}{2}})=0}-1$, which implies $g=0$ and $w_0(u_{\frac{m}{2}})=0$. So $u_{\frac{m}{2}}$ must be in $H$. The pair of vertices in $F$ that contains $v_\frac{m}{2}$ and the pair of vertices in $H$ that contains $u_\frac{m}{2}$ again give a contradiction to Observation~\ref{obs}. This completes the proof.
\end{proof}

\noindent
Now we prove our theorem on the exact brush number of $K_m \times P_n$.

\begin{thm} \label{theorem}
For even $m\ge 2$ and $n\ge 2$, $b(K_m \times P_n) = n(\frac{m^2}{4})$.
\end{thm}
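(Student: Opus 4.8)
The plan is to establish the matching lower bound $b(K_m \times P_n) \ge \frac{nm^2}{4}$ by induction on $n$, with the base case $n = 2$ being Lemma~\ref{lemma2}. For the inductive step I assume $b(K_m \times P_{n-1}) \ge (n-1)\frac{m^2}{4}$ and fix an optimal cleaning sequence $\alpha$ of $K_m \times P_n$, with initial configuration $w_0$, viewed as $n$ consecutive copies of $K_m$. Following the recipe set out before Lemma~\ref{lemma2}, I delete the first copy and modify the second according to the eight classes $A,\ldots,H$ determined by the edges between the first two copies: a vertex of the second copy in $A$ or $E$ loses an incoming brush and must be given one, a vertex in $C$ or $G$ has a spare brush and may surrender one, and the remaining vertices are left unchanged. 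Restricting $\alpha$ to the final $n-1$ copies then cleans $K_m \times P_{n-1}$ with this modified configuration, so the induction hypothesis bounds from below the number of brushes it uses by $(n-1)\frac{m^2}{4}$.

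Writing $W_1$ for the number of brushes in the deleted copy, this copy is still an end copy of degree $m$, so the calculation in Lemma~\ref{lemma2} applies verbatim and gives $W_1 = \frac{m^2-2m}{4} + 2a + 2b$. Combining this with the bound from the induction hypothesis yields
$$ b(K_m \times P_n) \ge (n-1)\frac{m^2}{4} + W_1 - (a+e) + (c+g) - \delta, $$
where $\delta$ counts the extra brushes the modification forces at the threshold, so that the whole theorem reduces to the single inequality $a + 2b - e + c + g - \delta \ge \frac{m}{2}$. The essential new phenomenon, absent from the base case, is a parity observation: since the second copy is now internal it meets two cross-edges (to the first and third copies) rather than one, so $d^+(v) - d^-(v)$ is odd at every vertex $v$ of the second copy. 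Hence no brushless vertex of the second copy that loses an incoming edge can sit exactly at $d^+(v) - d^-(v) = 0$, and therefore $\delta = 0$.

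It remains to prove $a + 2b - e + c + g \ge \frac{m}{2}$, which I do by closely following Lemma~\ref{lemma2}. A pair in $D$ and a pair in $E$ together produce a directed four-cycle, so Observation~\ref{obs} forces $e = 0$. The one step that genuinely changes is the count of brushed vertices in the second copy: for an end copy this is exactly $\frac{m}{2} - \delta$, but the two cross-edges now allow it to take any value in $\{\frac{m}{2}-1, \frac{m}{2}, \frac{m}{2}+1\}$, the outcome depending only on the orientations at the vertices of rank $\frac{m}{2}$ and $\frac{m}{2}+1$. When the count is at least $\frac{m}{2}$ the inequality is immediate, so the only dangerous case is when it equals $\frac{m}{2}-1$; this forces the rank-$\frac{m}{2}$ vertex of the second copy to be brushless with its edge to the deleted copy incoming, placing its pair in $B$ or $F$. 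If that pair lies in $B$ then $b \ge 1$ and we are done, so the real obstacle is an $F$-pair with $b = 0$, which is dispatched exactly as in Lemma~\ref{lemma2} by a short chain of applications of Observation~\ref{obs}: first this $F$-pair together with any $D$-pair forces $d = 0$, and then this $F$-pair together with the rank-$\frac{m}{2}$ pair of the deleted copy (which is then necessarily in $H$) yields the final contradiction. I expect this last case analysis, together with correctly pinning down the brushed-vertex count for an internal copy, to be the main obstacle; the remainder is the bookkeeping already in place for the base case.
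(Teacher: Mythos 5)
Your overall strategy matches the paper's own proof: induction on $n$ with Lemma~\ref{lemma2} as base case, the eight classes $A,\ldots,H$, the count $W_1=\frac{m^2-2m}{4}+2a+2b$, the observation that the second copy is now internal (degree $m+1$, odd) so no threshold correction $\delta$ is needed, and the reduction to $a+2b-e+c+g\ge\frac{m}{2}$. All of that is sound. But there is a genuine gap at the step where you claim that a pair in $D$ and a pair in $E$ produce a directed four-cycle, ``so Observation~\ref{obs} forces $e=0$.'' The observation yields a contradiction only when a $D$-pair and an $E$-pair \emph{coexist}; it therefore forces $d=0$ \emph{or} $e=0$, not $e=0$ outright. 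In the base case one may assume $e=0$ without loss of generality because $K_m\times P_2$ admits an automorphism exchanging the two copies of $K_m$, which swaps the classes $D$ and $E$; in the inductive step this symmetry is destroyed, since the first copy is an end copy of degree $m$ while the second is internal of degree $m+1$ and carries the remaining $n-2$ copies behind it. So the case $d=0$, $e>0$ must be handled separately, and it is not vacuous: the paper devotes its entire Case 2 to it, bounding $a+c+e+g$ by the number of brushed vertices of the second copy (which can now be as large as $\frac{m}{2}+1$), using $d=0$ to get $a+b+c=\frac{m}{2}-\delta_{w_0(u_{m/2})=0}$, and then eliminating the extremal subcases by playing $E$-pairs against $H$-pairs and $G$-pairs via Observation~\ref{obs}. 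None of this appears in your proposal, so as written your induction goes through only under the unjustified assumption $e=0$.

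A secondary, smaller issue: even the incompatibility of a $D$-pair with an $E$-pair needs more care here than in Lemma~\ref{lemma2}. In an internal copy, brushed vertices can have rank up to $\frac{m}{2}+1$ while brushless vertices can have rank as low as $\frac{m}{2}$, so ``brushed is cleaned before brushless'' no longer holds automatically, and the four-cycle $u\rightarrow u'\rightarrow v'\rightarrow v\rightarrow u$ is not immediate. One must first note that the brushless vertex of a $D$-pair has rank at least $\frac{m}{2}+1$ (its cross-edge to the deleted copy is outgoing, yet $d^+-d^-\le 0$), while the brushed vertex of an $E$-pair has rank at most $\frac{m}{2}$. Your rank computation in the ``dangerous case'' shows you have exactly the tools needed for this, but the step should be made explicit rather than imported verbatim from the base case.
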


\begin{proof}
Suppose $n\ge 3$. Using similar case analysis in the proof of the Lemma~\ref{lemma2}, we will show that $b(K_m \times P_n) \ge b(K_m \times P_{n-1}) + \frac{m^2}{4}$.
\\\\
Suppose $K_m \times P_n$ is cleaned using $b(K_m \times P_n)$ brushes with initial configuration $w_0$. Let $u_i$ be the $i$th vertex cleaned in the first copy of $K_m$ and $v_j$ be the $j$th vertex cleaned in the second copy of $K_m$. In the proof, $u$'s will always refer to vertices in the first copy of $K_m$ and $v$'s will always refer to vertices in the second copy of $K_m$.
\\\\
Like before, we split adjacent pair of vertices $(u,v)$ into eight classes as follows.

\begin{align*}
A &= \{(u,v):w_0(u)>0,u\rightarrow v,w_0(v)>0\},\quad |A|=a\\
B &= \{(u,v):w_0(u)>0,u\rightarrow v,w_0(v)=0\},\quad |B|=b\\
C &= \{(u,v):w_0(u)>0,u\leftarrow v,w_0(v)>0\},\quad |C|=c\\
D &= \{(u,v):w_0(u)>0,u\leftarrow v,w_0(v)=0\},\quad |D|=d\\
E &= \{(u,v):w_0(u)=0,u\rightarrow v,w_0(v)>0\},\quad |E|=e\\
F &= \{(u,v):w_0(u)=0,u\rightarrow v,w_0(v)=0\},\quad |F|=f\\
G &= \{(u,v):w_0(u)=0,u\leftarrow v,w_0(v)>0\},\quad |G|=g\\
H &= \{(u,v):w_0(u)=0,u\leftarrow v,w_0(v)=0\},\quad |H|=h
\end{align*}

\noindent
Deleting the first copy of $K_m$ and modifying the second copy of $K_m$ accordingly, we give a new initial configuration to clean $K_m \times P_{n-1}$. As before, if $w_0(u_i)>0$, then $i\leq \frac{m}{2}$ and if $w_0(u_i)=0$, then $i\geq \frac{m}{2}$. As for vertices in second copy of $K_m$, if $w_0(v_i)>0$, then $i\leq \frac{m}{2}+1$ and if $w_0(v_i)=0$, then $i\geq \frac{m}{2}$. Also, for $u_i$ with non-empty brushes in the initial configuration, $w_0(u_i) = m-2i+2$ if it is in $A$ or $B$ and $w_0(u_i) = m-2i$ otherwise. The number of brushes in the first copy of $K_m$ is then $\frac{m^2-2m}{4}+2a+2b$. After deleting the first copy of $K_m$, we need an extra brush for each vertex in $A$ and $E$. We can also take away a brush from each vertex in $C$ and $G$. Notice that every vertex in second copy of $K_m$ has degree $m+1$ and so even if $w_0(v_{\frac{m}{2}})=0$, we do not need an extra brush for it. This gives a lower bound for $b(K_m \times P_n)$,

$$b(K_m \times P_n) - \frac{m^2-2m}{4}-a-2b + e - c - g \ge b(K_m \times P_{n-1}).$$
We are left to show $a+2b-e+c+g\ge \frac{m}{2}$.
\\\\
A pair of vertices in $D$ and a pair of vertices in $E$ will again give a contradiction to Observation~\ref{obs}. So we have two cases to check.
\\\\
\textbf{Case 1: $e=0$.}

\noindent
Vertices in the second copy of $K_m$ that are in $A$, $C$, and $G$ are those with non-empty brushes in the initial configuration. Hence we have $a+c+g \ge \frac{m}{2} - \delta_{w_0(v_{\frac{m}{2}})=0}$. We are now done, unless $a+c+g=\frac{m}{2}-1$, $b=0$ and $w_0(v_{\frac{m}{2}})=0$, in which case $v_{\frac{m}{2}}$ is in $F$. Now, we must have $d=0$ as otherwise we will again contradict Observation~\ref{obs} from a pair of vertices in $D$ and the pair of vertices in $F$ that contains $v_{\frac{m}{2}}$.
\\\\
Exactly like before, we now have $a+c=\frac{m}{2}-\delta_{w_0(u_{\frac{m}{2}})=0}$ and so $g=\delta_{w_0(u_{\frac{m}{2}})=0}-1$, which implies $g=0$ and $w_0(u_{\frac{m}{2}})=0$. So $u_{\frac{m}{2}}$ must be in $H$. The pair of vertices in $F$ that contains $v_\frac{m}{2}$ and the pair of vertices in $H$ that contains $u_\frac{m}{2}$ contradict Observation~\ref{obs}. This completes the proof for Case 1.
\\\\
\textbf{Case 2: $d=0,e>0$.}

\noindent
In this case, we have $a+c+e+g\le \frac{m}{2} + \delta_{w_0(v_{\frac{m}{2}+1})=0}$, where $\delta_{w_0(v_{\frac{m}{2}+1})=0}$ is 1 if $w_0(v_{\frac{m}{2}+1})=0$ and 0 otherwise. So we are left to show $2a+2b+2c+2g\ge m+\delta_{w_0(v_{\frac{m}{2}+1})=0}$.
\\\\
Since $d=0$, we have $a+b+c=\frac{m}{2}-\delta_{w_0(u_{\frac{m}{2}})=0}$ and we are done unless $2g<2\delta_{w_0(u_{\frac{m}{2}})=0}+\delta_{w_0(v_{\frac{m}{2}+1})=0}$. We will show that this is not possible.
\\\\
We can see that $w_0(u_{\frac{m}{2}})=0$ only when $u_{\frac{m}{2}}$ is in $G$ or $H$, but if it is in $H$, then together with a pair of vertices in $E$ contradicts Observation~\ref{obs}. So $w_0(u_{\frac{m}{2}})=0$ implies $u_{\frac{m}{2}}$ is in $G$. Similarly, $w_0(v_{\frac{m}{2}+1})=0$ implies $v_{\frac{m}{2}+1}$ is in $G$. So $2g<2\delta_{w_0(u_{\frac{m}{2}})=0}+\delta_{w_0(v_{\frac{m}{2}+1})=0}$ can only hold when $g=1$ and $(u_{\frac{m}{2}},v_{\frac{m}{2}+1})$ is in $G$. This is not possible as together with a pair of vertices in $E$, we have a contradiction to Observation~\ref{obs}. This completes the proof for Case 2.
\\\\
Together with Lemma~\ref{lemma2}, and the initial configuration given at the beginning of this section, the proof of the theorem is now complete. 

\end{proof}

\section{Remarks}

As we can see in the proof of Lemma~\ref{lemma2} and Theorem~\ref{theorem}, the order of the vertices being cleaned can be determined from the number of brushes in the initial configuration of any optimal cleaning process, and this plays an important role in our proof. So this method could only work on very structured graphs. For example, it should be possible to use the same method to show that $b(K_m \times C_n) \ge b(K_m \times P_{n-1})+ \big\lfloor \frac{m^2}{4} \big\rfloor + 2-\delta_{m,odd}$, and hence give a different proof to \cite{bonato} for $b(K_m \times C_n)=\big\lfloor \frac{m^2}{4} \big\rfloor + 2$. But instead of splitting pairs of vertices into eight classes, we would need to split triplets of vertices into 32 classes.

\subsection*{Acknowledgement}
 
The author would like to thank Imre Leader for his invaluable comments.

\end{document}